\documentclass[a4paper,10pt]{article}
\usepackage[top=2.54cm,bottom=2.0cm,left=2.0cm,right=2.54cm, includeheadfoot]{geometry}

\usepackage[T1]{fontenc}
\usepackage[utf8]{inputenc}
\usepackage[english]{babel}
\usepackage{enumerate}
\usepackage{multirow,booktabs}
\usepackage[table]{xcolor}
\usepackage{fullpage}
\usepackage{lastpage}
\usepackage{indentfirst}

\usepackage{footnote}

\usepackage{amsmath,amsfonts,amssymb,amscd,amsthm}

\usepackage{bm}

\usepackage[all,2cell]{xy} \UseAllTwocells \SilentMatrices

\usepackage{hyperref}

\usepackage{subfiles}

\usepackage{multicol}

\usepackage{lineno}

\newtheorem{teo}{Theorem}[section]
\newtheorem{lem}[teo]{Lemma} 
\newtheorem{cor}[teo]{Corollary}
\newtheorem{prop}[teo]{Proposition}

\newtheorem{defn}[teo]{Definition}
\newtheorem{ex}[teo]{Example}

\newtheorem*{claim*}{Claim}

\newtheorem{rem}[teo]{Remark}

\newcommand{\cl}[1]{\left[#1\right]}
\newcommand{\nzd}{\operatorname{nzd}}

\newcommand{\U}{\mathcal U}

\begin{document}

\title{Marshall Quotients of the Rings $\mathbb Z/n\mathbb Z$}

\author{Lucas Colucci \\ Mathuzalem Ferreira de Lima\\ Kaique Matias de Andrade Roberto}

\date{}
\maketitle


\begin{abstract}
We study the Marshall quotient
\[
        M(n)=M(\mathbb Z/n\mathbb Z)
\]
obtained from the ring of integers modulo $n$ by quotienting by the square classes of non-zero-divisors.  Using elementary arithmetic of square classes modulo prime powers and the Chinese Remainder Theorem, we give an explicit description of these quotients and classify several of their structural properties.  We determine when the quotient relation is arithmetically elementary, when $M(n)$ is hyperbolic, when it can be real reduced, and when it can be formally real.  We also analyze the subset of invertible classes together with zero, proving exactly when it is a submultiring, when it is a hyperfield, and when it is hyperbolic.  The results provide a finite family of test examples for questions connecting multirings, special hyperfields, real semigroups, and abstract quadratic-form theory.
\end{abstract}

\noindent\textbf{Keywords.} Multirings; hyperfields; Marshall quotients; quadratic forms; square classes; rings modulo $n$.

\noindent\textbf{MSC 2020.} 11E81; 13A99; 16Y20; 12D15.

\section{Introduction}

There are several abstract theories of quadratic forms.  Early examples include abstract Witt rings, quaternionic structures, and Cordes schemes \cite{marshall1980abstract}.  Their common purpose was to isolate the formal properties of quadratic forms over fields and to use them in the construction, comparison, and obstruction theory of fields with prescribed quadratic-form behavior.  Later, Marshall's abstract spaces of orderings \cite{marshall1996spaces} provided an axiomatic setting for the reduced theory of quadratic forms and the theory of orderings.  The theory of special groups of Dickmann and Miraglia \cite{dickmann2000special} then gave a first-order framework treating reduced and non-reduced phenomena simultaneously.

The ring-theoretic case is more delicate.  Over a field, coefficients of quadratic forms are invertible once they are non-zero; over a general ring, zero-divisors and non-units cannot be ignored.  Real semigroups, abstract real spectra, and multirings were introduced in part to handle this difficulty.  In particular, Marshall's multiring approach \cite{marshall2006real} gives a language close to ordinary commutative algebra while retaining the multivalued addition naturally associated with square classes and quadratic forms.  This point of view is especially useful because multirings and hyperfields encode several structures appearing in abstract quadratic-form theory; see, for instance, \cite{ribeiro2016functorial,roberto2021quadratic,roberto2021ktheory}.

The purpose of this paper is to study, in a concrete finite situation, the Marshall quotients
\[
        M(A)=A/_m(A^2\cap\nzd(A))
\]
for the rings $A=\mathbb Z/n\mathbb Z$.  We write
\[
        M(n)=M(\mathbb Z/n\mathbb Z).
\]
Although the definition is simple, the resulting multiring remembers subtle arithmetic information about square classes modulo $n$, the zero-divisor structure of $\mathbb Z/n\mathbb Z$, and the behavior of sums of unit squares.

The guiding questions are the following.  For which $n$ is $M(n)$ essentially elementary?  For which $n$ is it hyperbolic?  Can $M(n)$ ever be real reduced or formally real?  If $M(n)^\times$ denotes the set of invertible elements of $M(n)$, when does
\[
        \U(n):=M(n)^\times\cup\{\cl0\}
\]
inherit a multiring or hyperfield structure from $M(n)$?  These questions are natural from the point of view of special hyperfields and real semigroups, where the invertible part of a structure often carries important quadratic-form information.

Our first result is an arithmetic model for $M(n)$.  If
\[
        S_n=(\mathbb Z_n^\times)^2
\]
is the subgroup of squares of units modulo $n$, then
\[
        \cl a=\cl b \quad\Longleftrightarrow\quad
        a\equiv bu^2\pmod n
        \quad\mbox{for some }u\in\mathbb Z_n^\times.
\]
The multivalued addition is given by the corresponding formula
\[
        \cl c\in \cl a+\cl b
\]
if and only if there are units $u,v,w$ modulo $n$ such that
\[
        cw^2\equiv au^2+bv^2\pmod n.
\]
This description reduces many questions to elementary congruences.

We then prove the following classification results.
\begin{enumerate}[i -]
    \item The quotient relation is trivial, i.e. $M(n)$ is the ordinary ring $\mathbb Z_n$, if and only if $n\mid24$.  If one also regards the prime quotients $M(p)$ as elementary square-class quotients, then the arithmetically elementary cases are exactly the integers $n$ such that $n\mid24$ or $n$ is prime.
    \item $M(n)$ is hyperbolic if and only if
    \[
        \gcd(n,30)=1.
    \]
    Equivalently, no prime divisor of $n$ is equal to $2$, $3$, or $5$.
    \item No $M(n)$ is real reduced.  Moreover, no $M(n)$ with $n\ge2$ is formally real.
    \item The set $\U(n)=M(n)^\times\cup\{\cl0\}$ is a submultiring of $M(n)$ if and only if $n=1$ or $n$ is prime.  Consequently, $\U(n)$ is a hyperfield if and only if $n$ is prime.
    \item The hyperfield $\U(n)$ is hyperbolic if and only if $n=p$ is a prime with $p\ge7$.  It is never real reduced.
\end{enumerate}

A small notational warning is useful.  For an odd prime $p$, the quotient $M(p)$ has three elements: the zero class, the square class, and the nonsquare class.  However, its hyperaddition depends on $p$ and it is not, in general, the sign hyperfield $Q_2$.  For this reason we avoid calling all such quotients ``trivial'' in the hyperfield-theoretic sense.  Instead, we use the term \emph{arithmetically elementary} for the finite square-class cases described above.

The paper is organized as follows.  Section \ref{preliminaries-section} recalls the definitions of multirings, hyperfields, Marshall quotients, hyperbolicity, real reducedness, and formal reality.  Section \ref{arithmetic-section} develops the arithmetic description of $M(n)$ and records the prime-power decompositions needed later.  Section \ref{small-section} gives the examples $M(9)$ and $M(21)$.  Section \ref{classification-section} contains the classification theorems.

\section{Multi Structures}\label{preliminaries-section}

We recall the multivalued structures used throughout the paper.  All multirings are assumed to be commutative.

\begin{defn}[Adapted from Definition 1.1 in \cite{marshall2006real}]\label{defn:multimonoid}
An \textbf{abelian or commutative multigroup} is a first-order structure $(G,\cdot,r,1)$, where $G$ is a non-empty set, $r:G\rightarrow G$ is a function, $1\in G$, and $\cdot\subseteq G\times G\times G$ is a ternary relation.  We write $c\in a\cdot b$ for $(a,b,c)\in\cdot$.  The following axioms are required for all $a,b,c,t\in G$:
\begin{description}
    \item[M1 -] If $c\in a\cdot b$, then $a\in c\cdot r(b)$ and $b\in r(a)\cdot c$.  We write $a\cdot b^{-1}$ for $a\cdot r(b)$.
    \item[M2 -] $b\in a\cdot1$ if and only if $a=b$.
    \item[M3 -] If there exists $x$ such that $x\in a\cdot b$ and $t\in x\cdot c$, then there exists $y$ such that $y\in b\cdot c$ and $t\in a\cdot y$.
    \item[M4 -] $c\in a\cdot b$ if and only if $c\in b\cdot a$.
\end{description}
The structure $(G,\cdot,1)$ is a \textbf{commutative multimonoid with unity} if it satisfies M3, M4, and $a\in1\cdot a$ for every $a\in G$.
\end{defn}

\begin{defn}[Adapted from Definition 2.1 in \cite{marshall2006real}]\label{defn:multiring}
A \textbf{multiring} is a sextuple $(R,+,\cdot,-,0,1)$, where $R$ is a non-empty set,
\[
        +:R\times R\rightarrow\mathcal P(R)\setminus\{\emptyset\}
\]
is a multivalued operation, $\cdot:R\times R\rightarrow R$ and $-:R\rightarrow R$ are functions, and $0,1\in R$, satisfying:
\begin{enumerate}[i -]
    \item $(R,+,-,0)$ is a commutative multigroup;
    \item $(R,\cdot,1)$ is a commutative monoid;
    \item $a\cdot0=0$ for every $a\in R$;
    \item if $c\in a+b$, then $c\cdot d\in a\cdot d+b\cdot d$.  Equivalently,
    \[
        (a+b)d\subseteq ad+bd.
    \]
\end{enumerate}
A multiring is a \textbf{hyperring} if the distributive inclusion above is always an equality.  A multiring is a \textbf{multifield} if every non-zero element is invertible.  In this situation the notions of multifield and hyperfield coincide.
\end{defn}

\begin{ex}\label{ex:basic-examples}
$ $
\begin{enumerate}[a -]
    \item Every ordinary commutative ring is a multiring by replacing each usual sum $a+b$ with the singleton $\{a+b\}$.
    \item The Krasner hyperfield $K=\{0,1\}$ has the usual product and multivalued sum given by $x+0=0+x=\{x\}$ and $1+1=\{0,1\}$.
    \item The sign hyperfield $Q_2=\{-1,0,1\}$ has the usual product and multivalued sum
    \[
    \begin{cases}
        0+x=x+0=\{x\}, & x\in Q_2,\\
        1+1=\{1\},\quad (-1)+(-1)=\{-1\},\\
        1+(-1)=(-1)+1=\{-1,0,1\}.
    \end{cases}
    \]
\end{enumerate}
\end{ex}

\begin{defn}[Marshall quotient, Example 2.6 in \cite{marshall2006real}]\label{defn:strangeloc}
Let $A$ be a multiring and let $S\subseteq A$ be a multiplicative subset with $1\in S$.  Define an equivalence relation on $A$ by
\[
        a\sim b \quad\Longleftrightarrow\quad as=bt
        \quad\mbox{for some }s,t\in S.
\]
We denote the equivalence class of $a$ by $\overline a$ and set
\[
        A/_mS=\{\overline a:a\in A\}.
\]
The operations are
\[
\begin{array}{rcl}
\overline a\,\overline b&=&\overline{ab},\\[2mm]
-\overline a&=&\overline{-a},\\[2mm]
\overline a+\overline b&=&\{\overline c:cv=as+bt\mbox{ for some }s,t,v\in S\}.
\end{array}
\]
\end{defn}

\begin{rem}
Marshall quotients are central in the multiring treatment of quadratic-form structures.  They appear naturally in the comparison of multirings with special groups and real semigroups; see \cite{ribeiro2016functorial,roberto2021quadratic,roberto2021ktheory}.
\end{rem}

\begin{defn}[Hyperbolic multiring]\label{defn:hyperbolic}
A multiring $R$ is \textbf{hyperbolic} if
\[
        1-1=R.
\]
Equivalently, every element of $R$ is represented by a difference of two copies of $1$ in the multivalued addition.
\end{defn}

\begin{defn}[Real reduced hyperfield]\label{defn:mfrealreduced}
A hyperfield $F$ is \textbf{real reduced} if $a^3=a$ for every $a\in F$ and
\[
        a\in1+1\quad\Longrightarrow\quad a=1.
\]
\end{defn}

\begin{defn}[Real reduced multiring]\label{defn:mrrealreduced}
A multiring $A$ is \textbf{real reduced} if the following conditions hold for all $a,b,c,d\in A$:
\begin{enumerate}[i -]
    \item $1\ne0$;
    \item $a^3=a$;
    \item $c\in a+ab^2$ implies $c=a$;
    \item if $c,d\in a^2+b^2$, then $c=d$.
\end{enumerate}
\end{defn}

\begin{defn}\label{defn:formally-real}
A multiring $A$ is \textbf{formally real} if
\[
        -1\notin \sum A^2,
\]
that is, if $-1$ is not contained in any finite multivalued sum of squares of elements of $A$.
\end{defn}

\section{The Arithmetic Model for $M(n)$}\label{arithmetic-section}

For $n\ge1$, write
\[
        \mathbb Z_n=\mathbb Z/n\mathbb Z.
\]
When no confusion is possible, we identify a residue class with its representative in $\{0,1,\ldots,n-1\}$.  The non-zero-divisors of $\mathbb Z_n$ are precisely the units $\mathbb Z_n^\times$.  Hence, for $A=\mathbb Z_n$, the multiplicative set in Definition \ref{defn:strangeloc} is
\[
        S_n=(\mathbb Z_n^\times)^2=\{u^2:u\in\mathbb Z_n^\times\}.
\]

\begin{defn}\label{defn:Mn}
For $n\ge1$, define
\[
        M(n):=M(\mathbb Z_n)=\mathbb Z_n/_m S_n.
\]
The class of $a\in\mathbb Z_n$ in $M(n)$ will be denoted by $\cl a$.
\end{defn}

\begin{lem}\label{lem:basic-description}
Let $a,b,c\in\mathbb Z_n$.  Then:
\begin{enumerate}[i -]
    \item $\cl a=\cl b$ in $M(n)$ if and only if
    \[
        a\equiv bu^2\pmod n
    \]
    for some $u\in\mathbb Z_n^\times$;
    \item $\cl c\in\cl a+\cl b$ in $M(n)$ if and only if there are units $u,v,w\in\mathbb Z_n^\times$ such that
    \[
        cw^2\equiv au^2+bv^2\pmod n.
    \]
\end{enumerate}
\end{lem}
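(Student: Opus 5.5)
The plan is to unwind Definition \ref{defn:strangeloc} with $A=\mathbb Z_n$ and $S=S_n=(\mathbb Z_n^\times)^2$. Everything reduces to the fact that $S_n$ is a multiplicative subgroup of $\mathbb Z_n^\times$: it contains $1$, is closed under products, and every element $s=u^2$ has inverse $s^{-1}=(u^{-1})^2\in S_n$. In particular each element of $S_n$ can be cancelled or moved to the other side of a congruence.

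For part (i), suppose first that $\cl a=\cl b$. By the definition there are $s,t\in S_n$ with $as=bt$. Write $s=x^2$ and $t=y^2$ with $x,y$ units. Multiplying by $s^{-1}$ gives
\[
a = b\,ts^{-1} = b\,(y x^{-1})^2,
\]
so $u=yx^{-1}$ is the required unit. Conversely, if $a\equiv bu^2$, take $s=1$ and $t=u^2$, both in $S_n$, so that $a\cdot 1=b\cdot u^2$ and hence $\cl a=\cl b$.

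For part (ii), I would first note that the definition of $\overline a+\overline b$ is phrased in terms of the chosen representatives $a,b$. The condition $cv=as+bt$ with $s,t,v\in S_n$ is exactly $cw^2\equiv au^2+bv^2$ after writing $v=w^2$, $s=u^2$, $t=v^2$ (renaming variables). So the equivalence is immediate once we know the right-hand condition depends only on the classes $\cl a,\cl b,\cl c$.

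That well-definedness check is the only point needing care, and I would handle it with part (i). If $a'=ar^2$, $b'=br'^2$ and $c'=cr''^2$ for units $r,r',r''$, then a relation
\[
cw^2=au^2+bv^2
\]
becomes
\[
c'(w r''^{-1})^2=a'(ur^{-1})^2+b'(v r'^{-1})^2 ,
\]
which again has unit coefficients. Hence the condition is invariant under change of representatives. This invariance is also implicit in Marshall's construction, so it may simply be cited. I expect no real obstacle. The main thing to state explicitly is that non-zero-divisors of $\mathbb Z_n$ are units, so $S_n$ consists of invertible elements and the cancellations above are legitimate.
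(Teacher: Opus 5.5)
Your proof is correct and follows the paper's route: in part (i) you use that $S_n$ is a subgroup of $\mathbb Z_n^\times$ to move $s$ across, and in part (ii) you translate Definition~\ref{defn:strangeloc} directly by writing the elements of $S_n$ as $w^2$, $u^2$, $v^2$. Your additional check that the condition does not depend on the representatives $a,b,c$ is left implicit in the paper. Strictly, it is needed to pass from a relation for some $c'$ with $\cl{c'}=\cl c$ to a relation for $c$ itself, so it only makes your argument more complete.
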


\begin{proof}
By Definition \ref{defn:strangeloc}, $\cl a=\cl b$ if and only if $as=bt$ for some $s,t\in S_n$.  Since $S_n$ is a subgroup of $\mathbb Z_n^\times$, this is equivalent to $a=bts^{-1}$ with $ts^{-1}\in S_n$, i.e. to $a\equiv bu^2\pmod n$ for some unit $u$.

The formula for addition is the same translation of Definition \ref{defn:strangeloc}.  Namely, $\cl c\in\cl a+\cl b$ if and only if $cr=as+bt$ for some $r,s,t\in S_n$.  Writing $r=w^2$, $s=u^2$, and $t=v^2$ gives the desired congruence.
\end{proof}

\begin{cor}\label{cor:gcd-invariant}
If $\cl a=\cl b$ in $M(n)$, then
\[
        \gcd(a,n)=\gcd(b,n).
\]
In particular, multiplying by square classes of units preserves the prime-power divisibility pattern of a residue modulo $n$.
\end{cor}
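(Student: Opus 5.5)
By Lemma \ref{lem:basic-description}(i), the hypothesis $\cl a=\cl b$ means that $a\equiv bu^2\pmod n$ for some unit $u\in\mathbb Z_n^\times$. So the plan is to show that multiplication by a unit modulo $n$ does not change the gcd with $n$, and then apply this with the unit $u^2$.

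First I fix integer representatives of $a$, $b$, $u$. The number $\gcd(a,n)$ depends only on the residue of $a$ modulo $n$, because $\gcd(a+kn,n)=\gcd(a,n)$. Hence it is well defined on $\mathbb Z_n$. Set $t=u^2$. Then $t$ is a unit, since $t\,(u^{-1})^2\equiv1$. So there is an integer $t'$ with $tt'\equiv1\pmod n$.

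Next comes the two-sided divisibility argument. Let $d=\gcd(b,n)$. Writing $a=bt+kn$, we see that $d$ divides both $bt$ and $kn$, so $d\mid a$. Together with $d\mid n$ this gives $\gcd(b,n)\mid\gcd(a,n)$. For the reverse direction, note that $b\equiv at'\pmod n$, and the same argument gives $\gcd(a,n)\mid\gcd(b,n)$. Both gcds are positive, so they are equal.

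For the ``in particular'' clause, I observe that the exponent of each prime $p^e\,\|\,n$ in $\gcd(a,n)$ equals $\min(v_p(a),e)$. This is exactly the divisibility pattern of the residue at each prime-power factor of $n$, so it is preserved by the previous step. No step here is a real obstacle; the only point needing care is that $\gcd$ is well defined on residues, which I checked first.
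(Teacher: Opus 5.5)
Your proposal is correct and takes the paper's route: you use Lemma \ref{lem:basic-description}(i) to rewrite $\cl a=\cl b$ as $a\equiv bu^2\pmod n$, then show that multiplying by the unit $u^2$ preserves $\gcd(\cdot,n)$. The paper justifies this last step by noting that multiplication by $u^2$ is an automorphism of the additive group $\mathbb Z_n$, which preserves each element's order $n/\gcd(x,n)$; your two-sided divisibility argument spells out the same fact explicitly.
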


\begin{proof}
If $a\equiv bu^2\pmod n$ and $u$ is a unit modulo $n$, then multiplication by $u^2$ is an automorphism of the additive group of $\mathbb Z_n$.  It preserves the greatest common divisor with $n$.
\end{proof}

\begin{prop}[Chinese remainder decomposition]\label{prop:crt-Mn}
Let
\[
        n=\prod_{i=1}^r p_i^{\alpha_i}
\]
be the prime factorization of $n$.  Then the Chinese Remainder Theorem induces an isomorphism of multirings
\[
        M(n)\cong \prod_{i=1}^r M(p_i^{\alpha_i}).
\]
\end{prop}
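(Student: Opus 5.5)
Let $m_i=p_i^{\alpha_i}$. The plan is to take the ring isomorphism $\varphi:\mathbb Z_n\to\prod_i\mathbb Z_{m_i}$ given by the Chinese Remainder Theorem and show that it descends to the Marshall quotients and respects the multivalued addition. The key point is that $\varphi$ restricts to a group isomorphism $\mathbb Z_n^\times\cong\prod_i\mathbb Z_{m_i}^\times$. Since squaring is computed componentwise, it also restricts to an isomorphism
\[
S_n\cong\prod_i S_{m_i}.
\]
So every tuple $(u_1^2,\dots,u_r^2)$ of unit squares is the image of a single unit square $u^2$ modulo $n$: choose $u$ as the CRT lift of $(u_1,\dots,u_r)$.

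First, define $\Phi:M(n)\to\prod_i M(m_i)$ by $\Phi(\cl a)=(\cl{a\bmod m_1},\dots,\cl{a\bmod m_r})$. I will check that $\Phi$ is well defined and injective using Lemma \ref{lem:basic-description}(i).
\begin{itemize}
\item For well-definedness: if $a\equiv bu^2\pmod n$, then reducing gives $a\equiv bu^2\pmod{m_i}$ for each $i$.
\item For injectivity: if $a\equiv bu_i^2\pmod{m_i}$ for units $u_i$, take the CRT lift $u$ of $(u_i)$. Then $u$ is a unit modulo $n$ and $a\equiv bu^2\pmod n$.
\end{itemize}
Surjectivity is immediate from surjectivity of $\varphi$. $\Phi$ preserves products, negation, $0$ and $1$ because $\varphi$ does and these operations on classes are induced from representatives.

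The main step is compatibility with the multivalued sum. Here the product multiring carries the componentwise hyperaddition: $(c_i)\in(a_i)+(b_i)$ if and only if $c_i\in a_i+b_i$ for every $i$. By Lemma \ref{lem:basic-description}(ii), $\cl c\in\cl a+\cl b$ in $M(n)$ if and only if $cw^2\equiv au^2+bv^2\pmod n$ for some units $u,v,w$.
\begin{itemize}
\item Forward direction: reducing such a congruence modulo each $m_i$ gives $\cl c\in\cl a+\cl b$ in every $M(m_i)$.
\item Converse: suppose for each $i$ there are units $u_i,v_i,w_i$ modulo $m_i$ with $cw_i^2\equiv au_i^2+bv_i^2\pmod{m_i}$. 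Lift the three tuples separately by CRT to units $u,v,w$ modulo $n$. The congruence then holds modulo every $m_i$, hence modulo $n$.
\end{itemize}
This is the only real obstacle. The witnesses $u,v,w$ are independent choices in each component, and it is exactly the simultaneous lifting of units (not merely of residues) that lets them be glued. The argument also needs the classes of $a,b,c$ in each factor to be represented by reductions of the same global $a,b,c$, which holds since $\varphi$ is surjective and $\Phi$ is well defined.

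Finally, a bijection that preserves the multiplicative structure and satisfies $z\in x+y\iff\Phi(z)\in\Phi(x)+\Phi(y)$ is an isomorphism of multirings. Its inverse automatically preserves all the structure, which concludes the proof.
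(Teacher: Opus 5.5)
Your proof is correct and follows the same route as the paper: the CRT ring isomorphism carries $S_n$ onto $\prod_i S_{p_i^{\alpha_i}}$, and then the equivalence relation and hyperaddition from Lemma \ref{lem:basic-description} are checked coordinatewise. You spell out the details that the paper compresses into a single sentence, namely well-definedness, injectivity, and gluing the witnesses $u,v,w$ by simultaneous CRT lifting of units.
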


\begin{proof}
The Chinese Remainder Theorem gives a ring isomorphism
\[
        \mathbb Z_n\cong\prod_{i=1}^r\mathbb Z_{p_i^{\alpha_i}}.
\]
Under this isomorphism, units correspond to tuples of units, and squares of units correspond to tuples of squares of units.  Therefore $S_n$ corresponds to $\prod_i S_{p_i^{\alpha_i}}$.  The defining equivalence relation and the multivalued addition in Lemma \ref{lem:basic-description} are coordinatewise under this identification.  Hence the induced map on Marshall quotients is an isomorphism of multirings.
\end{proof}

\begin{prop}[Odd prime powers]\label{prop:odd-prime-powers}
Let $p$ be an odd prime, let $\alpha\ge1$, and let $s$ be any unit whose image modulo $p$ is a quadratic non-residue.  Then
\[
        M(p^\alpha)=\{\cl0,\cl1,\cl s,\cl p,\cl{ps},\ldots,
        \cl{p^{\alpha-1}},\cl{p^{\alpha-1}s}\}.
\]
In particular, $|M(p^\alpha)|=2\alpha+1$.
\end{prop}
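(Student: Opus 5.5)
By Lemma \ref{lem:basic-description}(i), the elements of $M(p^\alpha)$ are the orbits of $\mathbb Z_{p^\alpha}$ under multiplication by the subgroup $S_{p^\alpha}$. So I will compute these orbits. The first step is to write every nonzero residue $a$ uniquely as $a=p^k t$, with $0\le k\le\alpha-1$ and $t$ a unit. Here $k$ is determined by $\gcd(a,p^\alpha)=p^k$, and $t$ is determined modulo $p^{\alpha-k}$. By Corollary \ref{cor:gcd-invariant}, classes with different $k$ are distinct, and $\cl0$ is a class by itself. It therefore suffices to show that, for each fixed $k$, the residues of valuation exactly $k$ form exactly two classes, namely $\cl{p^k}$ and $\cl{p^k s}$.

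For the count at level $k$, I use the structure of units. Since $p$ is odd, $\mathbb Z_{p^m}^\times$ is cyclic of even order $p^{m-1}(p-1)$ for $m\ge1$. Hence its subgroup of squares has index $2$, and a unit is a square modulo $p^m$ if and only if it is a quadratic residue modulo $p$. I would prove this without invoking cyclicity, by Hensel lifting: if $x^2\equiv t\pmod p$ with $p\nmid x$, then $f(X)=X^2-t$ has $f'(x)=2x\not\equiv0\pmod p$, so the root lifts to every $p^m$. Consequently the square classes of units modulo $p^m$ are represented exactly by $1$ and $s$.

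Now take $a=p^k t$ and $b=p^k t'$ with $t,t'$ units. The condition $a\equiv bu^2\pmod{p^\alpha}$ is equivalent to $t\equiv t'u^2\pmod{p^{\alpha-k}}$. Any unit modulo $p^{\alpha-k}$ lifts to a unit modulo $p^\alpha$, so $\cl a=\cl b$ holds exactly when $t/t'$ is a square modulo $p^{\alpha-k}$. By the previous paragraph, this means $t/t'$ is a quadratic residue modulo $p$. Since $\alpha-k\ge1$, both representatives $p^k$ and $p^k s$ occur, and they are distinct. Each level $k$ thus contributes exactly two classes, and adding $\cl0$ gives $|M(p^\alpha)|=2\alpha+1$.

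The main obstacle is the reduction step: checking that equality of classes can be tested modulo $p^{\alpha-k}$ after cancelling $p^k$, together with the lifting of units. Both are routine but need to be stated carefully. The square criterion for units should be handled via Hensel lifting, with oddness of $p$ used exactly at the point $2x\not\equiv0\pmod p$.
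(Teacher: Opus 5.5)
Your proof is correct and follows essentially the same route as the paper: split the nonzero residues by $p$-adic valuation (Corollary \ref{cor:gcd-invariant}), then use that the units modulo an odd prime power have exactly two square classes, represented by $1$ and $s$. Your reduction of $p^k t\equiv p^k t'u^2\pmod{p^\alpha}$ to $t\equiv t'u^2\pmod{p^{\alpha-k}}$, together with the criterion that a unit is a square modulo $p^m$ if and only if it is a residue modulo $p$, makes two points explicit that the paper passes over quickly: the unit part $t$ is only determined modulo $p^{\alpha-k}$, and $\cl{p^k}\ne\cl{p^k s}$ holds also for $k\ge1$.
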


\begin{proof}
Every non-zero residue modulo $p^\alpha$ can be written uniquely in the form $p^j u$, where $0\le j\le \alpha-1$ and $u$ is a unit modulo $p^\alpha$.  For odd $p$, the quotient
\[
        \mathbb Z_{p^\alpha}^\times/(\mathbb Z_{p^\alpha}^\times)^2
\]
has two elements, represented by $1$ and by any lift $s$ of a quadratic non-residue modulo $p$.  Thus $u$ is equivalent, up to multiplication by a square of a unit, either to $1$ or to $s$.  It follows that the class of $p^j u$ is either $\cl{p^j}$ or $\cl{p^j s}$.

The classes listed are distinct because multiplication by a unit square preserves the $p$-adic valuation, and because $1$ and $s$ represent the two distinct unit square classes.  Adding the zero class gives exactly $2\alpha+1$ classes.
\end{proof}

\begin{lem}\label{lem:square-subgroup-trivial}
The following are equivalent:
\begin{enumerate}[i -]
    \item $M(n)$ is the ordinary ring $\mathbb Z_n$, i.e. the quotient relation is equality;
    \item $S_n=\{1\}$;
    \item every unit $u\in\mathbb Z_n^\times$ satisfies $u^2=1$;
    \item $n\mid24$.
\end{enumerate}
\end{lem}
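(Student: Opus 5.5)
We prove the chain (ii)$\Leftrightarrow$(iii), (i)$\Leftrightarrow$(ii), and (iii)$\Leftrightarrow$(iv).

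The equivalence (ii)$\Leftrightarrow$(iii) is immediate from the definition $S_n=\{u^2:u\in\mathbb Z_n^\times\}$: $S_n=\{1\}$ says exactly that $u^2=1$ for every unit $u$.

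For (ii)$\Rightarrow$(i) we use Lemma \ref{lem:basic-description}. If $S_n=\{1\}$, part (i) of that lemma gives $\cl a=\cl b$ if and only if $a=b$, so the classes are the residues themselves. Part (ii) gives $\cl c\in\cl a+\cl b$ if and only if $c=a+b$, so the hyperaddition is the singleton ring addition. Hence $M(n)$ is the ordinary ring $\mathbb Z_n$.

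For (i)$\Rightarrow$(ii), suppose the quotient relation is equality. For each unit $u$ we have $\cl{u^2}=\cl1$ by Lemma \ref{lem:basic-description}(i), hence $u^2=1$, so $S_n=\{1\}$.

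The arithmetic content is (iii)$\Leftrightarrow$(iv), where we work through Proposition \ref{prop:crt-Mn}, or directly through the ring decomposition $\mathbb Z_n^\times\cong\prod\mathbb Z_{p_i^{\alpha_i}}^\times$. Condition (iii) holds for $n$ if and only if it holds for every prime-power factor $p^\alpha\| n$. In one direction, a unit $u$ modulo $p^\alpha$ lifts to a unit modulo $n$ by the Chinese Remainder Theorem, taking it $\equiv1$ on the other factors. In the other direction, squaring is computed coordinatewise. So (iii) is equivalent to every unit group $\mathbb Z_{p^\alpha}^\times$ with $p^\alpha\| n$ having exponent dividing $2$.

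We then classify the prime powers with this property.
\begin{itemize}
\item For odd $p$, $\mathbb Z_{p^\alpha}^\times$ is cyclic of order $p^{\alpha-1}(p-1)$. Exponent dividing $2$ forces $p^{\alpha-1}(p-1)\mid 2$, so $p=3$ and $\alpha=1$.
\item For $p=2$, $\mathbb Z_2^\times$, $\mathbb Z_4^\times$ and $\mathbb Z_8^\times\cong C_2\times C_2$ all have exponent dividing $2$. For $\alpha\ge4$ the group contains an element of order $4$: we have $3^2=9\not\equiv1\pmod{16}$, and reducing modulo $16$ shows $9\not\equiv1\pmod{2^\alpha}$. Hence $\alpha\le3$.
\end{itemize}
Therefore (iii) holds if and only if $n=2^a3^b$ with $a\le3$ and $b\le1$, that is, if and only if $n\mid24$.

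The only step needing real care is the $p=2$ case. It is settled by the explicit witness $3^2\not\equiv1\pmod{16}$, which avoids invoking the full structure theorem for $\mathbb Z_{2^\alpha}^\times$. For odd primes one can likewise avoid cyclicity: a primitive root $g$ modulo $p$ lifts to a unit with $g^2\not\equiv1\pmod p$ when $p\ge5$. For $p=3$ with $\alpha\ge2$, the element $2$ gives the witness $2^2=4\not\equiv1\pmod9$. Finally, the case $n=1$ is trivially included, since $1\mid24$.
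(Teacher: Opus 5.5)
Your proof is correct and follows essentially the paper's route: (i)--(iii) are read off from Lemma \ref{lem:basic-description}, and (iii)$\Leftrightarrow$(iv) goes through the CRT reduction to prime powers, the cyclicity of $\mathbb Z_{p^\alpha}^\times$ for odd $p$ (forcing $p^\alpha=3$), and the witness $3^2\not\equiv1\pmod{16}$ for $2^\alpha$ with $\alpha\ge4$. You are a little more explicit than the paper, notably in lifting the mod-$16$ witness to $2^\alpha$; one small wording fix is that ``element of order $4$'' should read ``unit whose square is not $1$,'' since for $\alpha\ge5$ that is all the unit $3$ witnesses, and it is all the argument needs.
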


\begin{proof}
The equivalence between (i), (ii), and (iii) follows immediately from Lemma \ref{lem:basic-description}.  We prove the equivalence with (iv).

Write
\[
        n=2^\alpha m,
\]
where $m$ is odd.  By the Chinese Remainder Theorem, the condition that every unit modulo $n$ has square $1$ is equivalent to the same condition modulo each prime-power factor of $n$.  For an odd prime power $p^e$, the group $\mathbb Z_{p^e}^\times$ is cyclic of order $p^{e-1}(p-1)$.  All its elements have square $1$ if and only if this order is at most $2$, which happens only for $p^e=3$.

For powers of $2$, one checks directly that all odd residues square to $1$ modulo $2$, $4$, and $8$, while this fails modulo $16$ since $3^2\equiv9\not\equiv1\pmod{16}$.  Therefore the possible powers of $2$ are $1,2,4,8$, and the only possible odd prime-power factor is $3$.  Hence
\[
        n\in\{1,2,3,4,6,8,12,24\},
\]
which is equivalent to $n\mid24$.
\end{proof}

\section{Some Calculations for Small $n$}\label{small-section}

We record two examples that will be used as references later.

\begin{ex}[$M(9)$]\label{ex:M9}
The squares of units modulo $9$ are
\[
        S_9=(\mathbb Z_9^\times)^2=\{1,4,7\}.
\]
Therefore
\[
        M(9)=\{\cl0,\cl1,\cl2,\cl3,\cl6\},
\]
where
\[
\begin{array}{rcl}
\cl0&=&\{0\},\\
\cl1&=&\{1,4,7\},\\
\cl2&=&\{2,5,8\},\\
\cl3&=&\{3\},\\
\cl6&=&\{6\}.
\end{array}
\]
The invertible classes are
\[
        M(9)^\times=\{\cl1,\cl2\},
\]
and hence
\[
        \U(9)=M(9)^\times\cup\{\cl0\}=\{\cl0,\cl1,\cl2\}.
\]
However, $\U(9)$ is not closed under the induced multivalued addition: for example,
\[
        \cl3\in\cl1+\cl2,
\]
because $3\equiv1+2\pmod9$.
\end{ex}

\begin{ex}[$M(21)$]\label{ex:M21}
The squares of units modulo $21$ are
\[
        S_{21}=\{1,4,16\}.
\]
Thus
\[
        M(21)=\{\cl0,\cl1,\cl2,\cl3,\cl5,\cl7,\cl9,\cl{10},\cl{14}\},
\]
where
\[
\begin{array}{rclcrcl}
\cl0&=&\{0\},       && \cl1&=&\{1,4,16\},\\
\cl2&=&\{2,8,11\},  && \cl3&=&\{3,6,12\},\\
\cl5&=&\{5,17,20\}, && \cl7&=&\{7\},\\
\cl9&=&\{9,15,18\}, && \cl{10}&=&\{10,13,19\},\\
\cl{14}&=&\{14\}.&&&
\end{array}
\]
The invertible classes are
\[
        M(21)^\times=\{\cl1,\cl2,\cl5,\cl{10}\}.
\]
Moreover,
\[
        \cl3\in\cl1+\cl2,
\]
since $3\equiv1+2\pmod{21}$.  The class $\cl3$ is not invertible, so $\U(21)$ is not a submultiring of $M(21)$.
\end{ex}

\section{Classification Results}\label{classification-section}

\subsection{Arithmetically elementary quotients}

The word ``trivial'' can be misleading in this context.  For example, if $p$ is an odd prime, then $M(p)$ has three elements, but its hyperaddition need not be the hyperaddition of the sign hyperfield $Q_2$.  We therefore use the following terminology.

\begin{defn}\label{defn:arith-elementary}
We say that $M(n)$ is \textbf{arithmetically elementary} if either
\begin{enumerate}[i -]
    \item the quotient relation is equality, so that $M(n)=\mathbb Z_n$ as an ordinary multiring; or
    \item $n$ is prime, so that $M(n)$ is the square-class quotient of a finite field.
\end{enumerate}
\end{defn}

\begin{prop}\label{prop:prime-case}
Let $p$ be an odd prime.  Then
\[
        M(p)=\{\cl0,\cl1,\cl s\},
\]
where $s$ is any quadratic non-residue modulo $p$.
\end{prop}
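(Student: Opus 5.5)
This is the special case $\alpha=1$ of Proposition \ref{prop:odd-prime-powers}. Because it is so elementary, I would write the argument out directly, using only Lemma \ref{lem:basic-description}, so that the prime case stands on its own.

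By Lemma \ref{lem:basic-description}(i), $\cl a=\cl b$ holds exactly when $a\equiv bu^2\pmod p$ for some $u\in\mathbb Z_p^\times$. The proof then has three steps.

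\textbf{Zero class.} The class of $0$ is $\{0\}$, since $0\cdot u^2=0$ and $b u^2=0$ forces $b=0$ when $u$ is a unit.

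\textbf{Nonzero classes.} Every nonzero residue is a unit, because $\mathbb Z_p$ is a field. So the nonzero classes are exactly the cosets of $S_p=(\mathbb Z_p^\times)^2$ in $\mathbb Z_p^\times$. The group $\mathbb Z_p^\times$ is cyclic of order $p-1$, which is even since $p$ is odd. The squaring map $u\mapsto u^2$ has kernel $\{\pm1\}$, and this kernel has exactly two elements because $1\ne-1$ for odd $p$. Hence $|S_p|=(p-1)/2$ and $[\mathbb Z_p^\times:S_p]=2$.

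\textbf{Representatives.} The coset $S_p$ itself consists of the quadratic residues and is $\cl1$. The other coset consists of the non-residues. Any non-residue $s$ lies outside $S_p$, so $\cl s\ne\cl1$ and $\cl s$ is the whole non-residue coset. The choice of $s$ does not matter: if $s'$ is another non-residue, then $s'/s$ is a residue, so $\cl{s'}=\cl s$.

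Putting these together gives $M(p)=\{\cl0,\cl1,\cl s\}$ with three distinct elements. There is no real obstacle. The only point needing care is the index computation, which uses that $p$ is odd so that $-1\ne1$. This is why the statement excludes $p=2$, where $M(2)=\mathbb Z_2$ has only two elements.
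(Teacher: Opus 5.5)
Your proof is correct and matches the paper, which proves this by specializing Proposition \ref{prop:odd-prime-powers} to $\alpha=1$. Your direct argument is that proof's content in the prime case: nonzero residues are units, and $\mathbb Z_p^\times/S_p$ has exactly two cosets. You also justify the index computation through the kernel $\{\pm1\}$ of squaring, which the paper only asserts.
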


\begin{proof}
This is Proposition \ref{prop:odd-prime-powers} with $\alpha=1$.
\end{proof}

\begin{teo}\label{teo:arith-elementary}
The quotient $M(n)$ is arithmetically elementary if and only if
\[
        n\mid24
        \quad\mbox{or}\quad
        n\mbox{ is prime}.
\]
\end{teo}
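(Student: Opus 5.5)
This theorem is essentially a direct unpacking of Definition \ref{defn:arith-elementary} combined with Lemma \ref{lem:square-subgroup-trivial}. By definition, $M(n)$ is arithmetically elementary exactly when either (i) the quotient relation is equality, or (ii) $n$ is prime. So the plan is to show that condition (i) holds if and only if $n\mid24$. The disjunction with condition (ii) then gives the statement verbatim.

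For the forward direction, suppose $M(n)$ is arithmetically elementary. If it falls under clause (ii), then $n$ is prime and we are done. If it falls under clause (i), the quotient relation is equality, and Lemma \ref{lem:square-subgroup-trivial}, (i)$\Rightarrow$(iv), gives $n\mid24$.

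For the converse, suppose first that $n\mid24$. Then Lemma \ref{lem:square-subgroup-trivial}, (iv)$\Rightarrow$(i), shows that $S_n=\{1\}$ and that the quotient relation is equality. By Lemma \ref{lem:basic-description}, the multivalued sum collapses to the singleton $\{\cl{a+b}\}$, so $M(n)$ is the ordinary ring $\mathbb Z_n$, which is clause (i). Suppose instead that $n$ is prime. Then $\mathbb Z_n$ is a finite field and $M(n)$ is its square-class quotient, which is clause (ii) by definition; Proposition \ref{prop:prime-case} makes this explicit for odd $p$. The overlap cases $n=2,3$ satisfy both clauses, and this causes no conflict.

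There is no real obstacle here, since all the arithmetic content sits in Lemma \ref{lem:square-subgroup-trivial}. The only point needing care is the consistency check that a trivial quotient relation really yields the ordinary ring structure, including the multivalued addition and not just the underlying set. That is exactly the verification via Lemma \ref{lem:basic-description}(ii) with $u=v=w=1$.
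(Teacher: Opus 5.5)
Your proposal is correct and follows the paper's proof: both reduce the equality clause of Definition \ref{defn:arith-elementary} to $n\mid24$ via Lemma \ref{lem:square-subgroup-trivial}, and both treat the prime clause as holding by definition. Your extra check that the hyperaddition collapses to $\{\cl{a+b}\}$ is already contained in item (i) of that lemma. Strictly, that check uses the fact that $S_n=\{1\}$ forces $u^2=v^2=w^2=1$ for every choice of units, not only the choice $u=v=w=1$.
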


\begin{proof}
By Lemma \ref{lem:square-subgroup-trivial}, the quotient relation is equality if and only if $n\mid24$.  By definition, every prime $n=p$ gives an arithmetically elementary square-class quotient.  Conversely, these are exactly the two alternatives in Definition \ref{defn:arith-elementary}.
\end{proof}

\begin{rem}
If one uses the word ``trivial'' only for the equality quotient $M(n)=\mathbb Z_n$, then the classification is simply $n\mid24$.  If one also regards the prime square-class quotients as elementary finite building blocks, then Theorem \ref{teo:arith-elementary} gives the corresponding classification.
\end{rem}

\subsection{Hyperbolic quotients}

We now classify the integers $n$ for which $M(n)$ is hyperbolic.

\begin{lem}\label{lem:prime-hyp}
Let $p$ be a prime.  Then $M(p)$ is hyperbolic if and only if $p\ge7$.
\end{lem}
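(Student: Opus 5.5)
We must show that $\cl1-\cl1=M(p)$ exactly when $p\ge7$. By Lemma \ref{lem:basic-description}, $\cl c\in\cl1+\cl{-1}$ holds if and only if
\[
        cw^2\equiv u^2-v^2\pmod p
\]
for some units $u,v,w$. Since $w^2$ runs over the nonzero squares, the condition depends only on the class of $c$. So the plan is to determine which classes are represented by differences $u^2-v^2$ of two \emph{nonzero} squares modulo $p$, and to handle $p=2,3,5$ by direct computation.

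\textbf{Small primes.} For $p=2$ and $p=3$, Lemma \ref{lem:square-subgroup-trivial} gives $M(p)=\mathbb Z_p$, so $1-1=\{0\}\neq M(p)$. For $p=5$, the unit squares are $\{1,4\}$. The differences of two of them are $0$, $1-4=-3\equiv2$ and $4-1=3$. Hence only the zero class and the nonsquare class $\cl2=\{2,3\}$ occur, and $\cl1\notin\cl1-\cl1$. So $M(5)$ is not hyperbolic.

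\textbf{Primes $p\ge7$.} The zero class is always attained, taking $u=v=1$. It remains to represent one square and one nonsquare as $u^2-v^2$ with $u,v\neq0$. I would use the factorization $u^2-v^2=(u-v)(u+v)$. Given any nonzero target $c$, set $u-v=x$ and $u+v=c/x$, which is possible since $2$ is invertible. This gives
\[
        u=\tfrac12\left(x+\tfrac{c}{x}\right),\qquad v=\tfrac12\left(\tfrac{c}{x}-x\right).
\]
We need $x\in\mathbb Z_p^\times$ with $x^2\neq\pm c$, so that $u,v\neq0$. At most four values of $x$ are excluded, and there are $p-1\ge6$ nonzero $x$ available. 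Hence a valid choice exists for every nonzero $c$, in particular for $c=1$ and $c=s$.

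\textbf{Main obstacle.} The key step is this counting bound $p-1>4$. It is precisely what fails at $p=5$: there the four excluded values exhaust all $x$ when $c=1$, since $\pm1$ are both squares modulo $5$. The only care needed in the write-up is to make sure the roots of $x^2=c$ and $x^2=-c$ are counted correctly, with at most two each.
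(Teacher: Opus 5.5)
Your proposal is correct and follows essentially the same route as the paper: it settles $p=2,3,5$ by inspecting the unit squares, and for $p\ge7$ it writes the target as $(u-v)(u+v)$ with $u-v=x$, $u+v=c/x$, using the same count that at most four values $x$ satisfy $x^2=\pm c$ while $p-1\ge6$. The only cosmetic differences are that you cite Lemma \ref{lem:square-subgroup-trivial} for $p=2,3$, and that you represent just $\cl1$ and $\cl s$ where the paper represents every nonzero residue.
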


\begin{proof}
For $p=2$, the only unit square is $1$, and $1-1=0$.  Hence $\cl1\notin\cl1-\cl1$.  The same obstruction occurs for $p=3$, since the only unit square modulo $3$ is $1$.  For $p=5$, the unit squares are $1$ and $4$, and the differences of two unit squares are congruent only to $0$, $2$, or $3$ modulo $5$.  Thus no element of the square class $\{1,4\}$ belongs to $\cl1-\cl1$.

Now suppose $p\ge7$.  It is enough to prove that every non-zero residue $x\in\mathbb F_p^\times$ is a difference of two non-zero squares.  Choose $r\in\mathbb F_p^\times$ such that
\[
        r^2\ne x\quad\mbox{and}\quad r^2\ne -x.
\]
This is possible because the two equations $r^2=x$ and $r^2=-x$ have altogether at most four solutions, while $|\mathbb F_p^\times|=p-1\ge6$.  Define
\[
        a=\frac{1}{2}\left(r+xr^{-1}\right),
        \qquad
        b=\frac{1}{2}\left(r-xr^{-1}\right).
\]
The choice of $r$ guarantees that $a$ and $b$ are non-zero.  Moreover,
\[
        a^2-b^2=(a+b)(a-b)=r\cdot xr^{-1}=x.
\]
Thus every non-zero class, and also the zero class, belongs to $\cl1-\cl1$.  Hence $M(p)$ is hyperbolic.
\end{proof}

\begin{lem}\label{lem:prime-power-hyp}
Let $p\ge7$ be a prime and let $\alpha\ge1$.  Then $M(p^\alpha)$ is hyperbolic.
\end{lem}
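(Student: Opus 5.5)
We must show that every element $y\in\mathbb Z_{p^\alpha}$ can be written as $y\equiv u^2-v^2\pmod{p^\alpha}$ with units $u,v$. Indeed, by Lemma \ref{lem:basic-description}, this gives $\cl y\in\cl1-\cl1$ with $w=1$. This covers all classes, including $\cl0$ (take $u=v=1$) and the non-unit classes $\cl{p^j}$, $\cl{p^js}$.

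The plan copies the factorization trick from Lemma \ref{lem:prime-hyp}. Since $p$ is odd, $2$ is invertible modulo $p^\alpha$. For units $u,v$ we have $u^2-v^2=(u+v)(u-v)$. Conversely, for any $r,t\in\mathbb Z_{p^\alpha}$ with $rt\equiv y$, we can set
\[
u=\tfrac12(r+t),\qquad v=\tfrac12(r-t),
\]
and then $u^2-v^2=rt=y$. So it suffices to find a factorization $y\equiv rt$ in which both $r+t$ and $r-t$ are units, that is, $r\not\equiv \pm t\pmod p$.

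We split into two cases.

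\emph{Case 1: $y$ is a unit.} Take $r$ a unit and $t=yr^{-1}$. We need $r^2\not\equiv \pm y\pmod p$. Only the residue of $r$ modulo $p$ matters, and at most four residues are excluded among the $p-1\ge6$ nonzero residues, exactly as in Lemma \ref{lem:prime-hyp}. Alternatively, solve the problem modulo $p$ by Lemma \ref{lem:prime-hyp} and lift by Hensel's lemma; the direct factorization avoids this.

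\emph{Case 2: $y$ is not a unit,} so $p\mid y$ (this includes $y=0$). Take $r=1$ and $t=y$. Then $r+t\equiv1$ and $r-t\equiv1\pmod p$, so both are units, and $u=\tfrac12(1+y)$, $v=\tfrac12(1-y)$ are units with $u^2-v^2=y$.

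Hence $\cl1-\cl1=M(p^\alpha)$. No step is a real obstacle. The only subtle point is that units modulo $p^\alpha$ are exactly the elements nonzero modulo $p$. This ensures that the counting condition only needs to hold modulo $p$, which is where $p\ge7$ is used. In fact, the non-unit case works for any odd $p$.
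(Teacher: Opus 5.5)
Your proof is correct and follows the paper's argument. Both use the factorization $y=rt$ with $u=\tfrac12(r+t)$, $v=\tfrac12(r-t)$, and the same modulo-$p$ counting when $y$ is a unit. The only difference is cosmetic: for non-units you take $r=1$, $t=y$ instead of the paper's $r=u$, $t=p^j$ (from $y=p^ju$), which also covers $y=0$ without a separate step.
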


\begin{proof}
Let $x\in\mathbb Z_{p^\alpha}$.  We show that $x$ is congruent modulo $p^\alpha$ to a difference of two unit squares.

If $x$ is a unit, choose a unit $r$ modulo $p^\alpha$ such that
\[
        r^2\not\equiv x\pmod p,
        \qquad
        r^2\not\equiv -x\pmod p.
\]
This is possible by the same counting argument as in Lemma \ref{lem:prime-hyp}.  Put
\[
        a=2^{-1}(r+xr^{-1}),
        \qquad
        b=2^{-1}(r-xr^{-1})
\]
in $\mathbb Z_{p^\alpha}$.  Then $a$ and $b$ are units, and
\[
        a^2-b^2\equiv x\pmod{p^\alpha}.
\]

If $x$ is not a unit and $x\ne0$, write $x=p^j u$ with $1\le j\le\alpha-1$ and $u$ a unit.  Set
\[
        a=2^{-1}(u+p^j),
        \qquad
        b=2^{-1}(u-p^j).
\]
Both $a$ and $b$ are units, because they are congruent to $2^{-1}u$ and $2^{-1}u$ up to sign modulo $p$.  Moreover,
\[
        a^2-b^2=(a+b)(a-b)=u\,p^j=x.
\]
Finally, $0=1^2-1^2$.  Therefore every class of $M(p^\alpha)$ belongs to $\cl1-\cl1$.
\end{proof}

\begin{teo}\label{teo:hyperbolic}
For $n\ge1$, the multiring $M(n)$ is hyperbolic if and only if
\[
        \gcd(n,30)=1.
\]
Equivalently, $2\nmid n$, $3\nmid n$, and $5\nmid n$.
\end{teo}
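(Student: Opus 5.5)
The plan is to reduce to prime powers via Proposition \ref{prop:crt-Mn} and then use Lemmas \ref{lem:prime-hyp} and \ref{lem:prime-power-hyp}. The first step is to check that hyperbolicity is compatible with the product decomposition. Under the isomorphism $M(n)\cong\prod_i M(p_i^{\alpha_i})$, the unit $\cl1$ corresponds to $(\cl1,\ldots,\cl1)$ and the multivalued addition is coordinatewise (this is how the proof of Proposition \ref{prop:crt-Mn} identifies the structures). Concretely, by Lemma \ref{lem:basic-description}, $\cl x\in\cl1-\cl1$ in $M(n)$ if and only if $xw^2\equiv u^2-v^2\pmod n$ for some units $u,v,w$. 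By the Chinese Remainder Theorem, units modulo $n$ are exactly tuples of units modulo the $p_i^{\alpha_i}$, so this congruence holds if and only if it holds modulo each $p_i^{\alpha_i}$ with independently chosen units. Hence $1-1=R$ in the product exactly when $1-1$ is everything in each factor, and $M(n)$ is hyperbolic if and only if every $M(p_i^{\alpha_i})$ is hyperbolic. The case $n=1$ is trivially hyperbolic, since $M(1)$ is a single point.

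For the ``if'' direction: if $\gcd(n,30)=1$, every prime divisor satisfies $p_i\ge7$. Lemma \ref{lem:prime-power-hyp} then shows each $M(p_i^{\alpha_i})$ is hyperbolic, and so $M(n)$ is hyperbolic.

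For the ``only if'' direction, suppose $p\in\{2,3,5\}$ divides $n$, with $p^\alpha\,\|\,n$. By the first step it suffices to show that $M(p^\alpha)$ is not hyperbolic. I would exhibit $\cl1\notin\cl1-\cl1$ by reducing modulo $p$. If $w^2\equiv u^2-v^2\pmod{p^\alpha}$ with $u,v,w$ units, then reducing modulo $p$ gives $w^2=u^2-v^2$ in $\mathbb F_p$ with all three non-zero. The computation in Lemma \ref{lem:prime-hyp} rules this out:
\begin{itemize}
\item for $p=2,3$ the only nonzero square is $1$, so $u^2-v^2=0\neq w^2$;
\item for $p=5$ the differences of nonzero squares lie in $\{0,2,3\}$, while $w^2\in\{1,4\}$.
\end{itemize}
So $\cl1\notin\cl1-\cl1$ in $M(p^\alpha)$, and hence $M(n)$ is not hyperbolic. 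Alternatively, one can work directly in $M(n)$: the element $\cl1$ would require $w^2\equiv u^2-v^2\pmod n$, and reducing modulo $p$ gives the same contradiction, with no factorization needed.

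The main obstacle is the bookkeeping in the first step, namely justifying that membership in $\cl1-\cl1$ decomposes coordinatewise with \emph{independent} unit choices in each coordinate. The CRT bijection on units handles this. Beyond that, the argument consists of citing the two lemmas, plus the reduction modulo $p$ needed to pass the $p=2,3,5$ obstruction from $\mathbb F_p$ up to $p^\alpha$.
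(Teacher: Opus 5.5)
Your proposal is correct and follows the paper's proof. The ``if'' direction combines Lemma \ref{lem:prime-power-hyp} with the Chinese Remainder decomposition of Proposition \ref{prop:crt-Mn}, and the ``only if'' direction shows $\cl1\notin\cl1-\cl1$ by reducing $w^2\equiv u^2-v^2$ modulo $p\in\{2,3,5\}$, exactly as in your direct alternative; your explicit check that membership in $\cl1-\cl1$ splits coordinatewise with independently chosen units justifies a step the paper only asserts.
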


\begin{proof}
Suppose first that $\gcd(n,30)=1$.  Then every prime divisor $p$ of $n$ satisfies $p\ge7$.  By Lemma \ref{lem:prime-power-hyp}, every prime-power factor $M(p^\alpha)$ of $M(n)$ is hyperbolic.  The Chinese Remainder decomposition in Proposition \ref{prop:crt-Mn} then implies that $M(n)$ is hyperbolic.

Conversely, suppose that $2$, $3$, or $5$ divides $n$.  We show that $\cl1\notin\cl1-\cl1$.

If $2\mid n$, then every unit square is congruent to $1$ modulo $2$, and every difference of two unit squares is congruent to $0$ modulo $2$.  Hence such a difference cannot represent the class $\cl1$.

If $3\mid n$, the same argument works modulo $3$, because every unit square is congruent to $1$ modulo $3$.

If $5\mid n$, the unit squares modulo $5$ are $1$ and $4$.  Their differences are congruent only to $0$, $2$, or $3$ modulo $5$, never to a square modulo $5$.  Hence the class $\cl1$ cannot belong to $\cl1-\cl1$ in $M(n)$.

Thus $M(n)$ is not hyperbolic when $n$ is divisible by $2$, $3$, or $5$.
\end{proof}

\subsection{Real reducedness and formal reality}

The next results show that the quotients $M(n)$ do not produce real reduced or formally real examples.

\begin{lem}\label{lem:two-squares-local}
For every prime $p$, there exist non-zero elements $u,v\in\mathbb F_p$ such that
\[
        u^2+v^2
\]
is not a non-zero square in $\mathbb F_p$.
\end{lem}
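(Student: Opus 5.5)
The statement asks for non-zero $u,v\in\mathbb F_p$ such that $u^2+v^2$ is either $0$ or a non-square. I plan to split into small primes, handled by direct computation, and odd primes, handled by a counting argument.

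\textbf{The prime $p=2$.} Take $u=v=1$. Then $u^2+v^2=0$, which is not a non-zero square.

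\textbf{The case $p\equiv 1\pmod 4$.} Here $-1$ is a square, say $-1=i^2$ with $i\in\mathbb F_p^\times$. Take $u=1$ and $v=i$. Then $u^2+v^2=0$, which again is not a non-zero square.

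\textbf{The case $p\equiv 3\pmod 4$.} I will argue by contradiction. Suppose that $u^2+v^2$ is a non-zero square for all non-zero $u,v$. Let $Q$ be the set of non-zero squares. Then $Q$ is closed under addition: $Q+Q\subseteq Q$. Since $1\in Q$, repeated addition gives $1+1+\cdots+1=p\cdot 1=0\in Q$. This is impossible, because $Q$ contains only non-zero elements. More simply, the chain $1\in Q$, $2\in Q$, \dots, $k\in Q$ must eventually reach $p\equiv 0$, and that step is the contradiction.

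This additive-closure argument does not actually use $p\equiv 3\pmod 4$, so it covers every prime uniformly, including $p=2$. For concreteness I will also exhibit an explicit witness. Let $k$ be the least positive integer such that $k$ is not a non-zero square modulo $p$. Such a $k$ exists and satisfies $k\le p$, since $p\equiv0$ fails to be a non-zero square. We have $k\ge2$, and $k-1$ is a non-zero square, say $k-1=v^2$ with $v\neq0$. Then $u=1$ and this $v$ give $u^2+v^2=k$, which is not a non-zero square.

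I do not expect any real obstacle. The one point needing care is that when $k\equiv0$, the witness still fits the statement, because the sum $0$ is "not a non-zero square." This is exactly the form in which the statement is phrased.
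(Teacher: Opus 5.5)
Your proof is correct and is essentially the paper's argument: the same treatment of $p=2$ and of the case where $-1$ is a square, and in the remaining case the same idea that $1+Q\subseteq Q$ together with $1\in Q$ pushes $1,2,\dots,p$ into $Q$ and so forces $0\in Q$. Your direct induction is leaner than the paper's, which first upgrades $1+Q\subseteq Q$ to $1+Q=Q$ by a cardinality argument that is not needed. Your remark that the argument works uniformly for every $p$ also makes the case split superfluous; your explicit witness ($u=1$ and $v^2=k-1$, with $k$ the least positive integer that is not a non-zero square) shows the same thing.
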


\begin{proof}
For $p=2$, take $u=v=1$; then $u^2+v^2=0$.

Now let $p$ be odd.  If $-1$ is a square modulo $p$, take $u=1$ and choose $v$ with $v^2=-1$.  Then $u^2+v^2=0$.

Assume, therefore, that $-1$ is not a square modulo $p$.  Let $Q$ be the set of non-zero squares in $\mathbb F_p$.  If $1+x\in Q$ for every $x\in Q$, then $1+Q\subseteq Q$.  Since translation by $1$ is injective and $-1\notin Q$, the set $1+Q$ has the same cardinality as $Q$ and does not contain $0$.  Hence $1+Q=Q$.  Iterating, one obtains $m+Q=Q$ for every integer $m$, which would force $Q=\mathbb F_p$, a contradiction.  Thus there is $x\in Q$ such that $1+x$ is not a non-zero square.  Writing $x=v^2$ and taking $u=1$ proves the lemma.
\end{proof}

\begin{teo}\label{teo:not-real-reduced}
There is no positive integer $n$ such that $M(n)$ is a real reduced multiring.
\end{teo}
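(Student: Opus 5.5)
Our plan is to show that, for every $n$, one of the axioms (i), (iii), (iv) of Definition \ref{defn:mrrealreduced} fails. The whole argument uses the arithmetic model of Lemma \ref{lem:basic-description}.

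The case $n=1$ is immediate, since $\mathbb Z_1$ is the zero ring and $\cl1=\cl0$ violates (i). So assume $n\ge2$ and fix a prime $p\mid n$. We aim to violate axiom (iv) with $a=b=\cl1$, that is, to find two distinct classes in $\cl1+\cl1$. By Lemma \ref{lem:basic-description}, $\cl1+\cl1$ is the set of classes $\cl c$ with $cw^2\equiv u^2+v^2\pmod n$ for units $u,v,w$. In particular, $\cl2=\cl{1+1}$ always lies in $\cl1+\cl1$.

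To produce a second element, we would use Lemma \ref{lem:two-squares-local} at the prime $p$. It gives non-zero $u_0,v_0\in\mathbb F_p$ such that $u_0^2+v_0^2$ is not a non-zero square mod $p$. Lift $u_0,v_0$ to units $u,v$ modulo $n$; this is possible by the Chinese Remainder Theorem, taking $1$ in the other prime-power coordinates and any lift modulo $p^\alpha$. Set $c=u^2+v^2$, so that $\cl c\in\cl1+\cl1$. If $\cl c\ne\cl2$, axiom (iv) fails and we are done.

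The main obstacle is that $\cl c$ may coincide with $\cl2$. We handle this by comparing $\cl2$ with the class $\cl1$ of a non-zero square. Since $u_0^2+v_0^2$ is zero or a non-square mod $p$, the reduction of $c$ mod $p$ is never a non-zero square, and this is invariant under multiplication by unit squares. So if $2$ is a non-zero square mod $p$, then $\cl c\ne\cl2$ and we are done.

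Otherwise $2$ is zero or a non-square mod $p$. In that case we compare $\cl2$ with $\cl{u^2+v^2}$ for a choice where $u^2+v^2$ reduces to a non-zero square mod $p$. If $p$ is odd and not $3$, take $u=3,v=4$: then $u^2+v^2=25$ is a non-zero square mod $p$, provided $p\nmid 3,4,5$. The primes $2,3,5$ must therefore be checked directly.

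For $p=2$ we do not need any of this. Axiom (ii) fails because $\cl2^3=\cl8\ne\cl2$ when $4\mid n$. If $n=2m$ with $m$ odd, we use the other prime factors, or for $n=2$ we check directly that $\cl1+\cl1\cdot\cl1^2\ni\cl0\ne\cl1$, which violates (iii).

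For $p=3$, $1+1=2$ is a non-square mod $3$, so $\cl1+\cl1=\{\cl2\}$ there. Instead we use axiom (iii) with $a=\cl1$ and $b=\cl1$: $\cl1+\cl1\ni\cl2\ne\cl1$, since $2$ is not a square mod $3$. This argument in fact works for any prime $p$ with $2$ not a non-zero square mod $p$, including $p=5$. This suggests streamlining the whole proof as follows. Axiom (iii) with $a=b=\cl1$ forces $\cl2=\cl1$, which requires $2$ to be a unit square mod $n$. This already fails when $p\in\{2,3,5\}$. When instead $2$ is a square mod $p$ for every $p\mid n$, we fall back on the axiom (iv) argument from the previous paragraph, where $\cl2$ and the lifted non-square class $\cl c$ differ.
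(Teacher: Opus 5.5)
Your final (streamlined) argument is correct and runs on the same engine as the paper's proof. Lemma \ref{lem:two-squares-local} at a prime $p\mid n$, lifted by the Chinese Remainder Theorem, gives $\cl c=\cl{u^2+v^2}\in\cl1+\cl1$ whose reduction modulo $p$ is not a non-zero square, and multiplication by unit squares preserves this property. The organization differs. The paper splits by square-freeness and parity: for $p^2\mid n$, axiom (ii) fails via $(\cl p)^3=\cl{p^3}\ne\cl p$; for even square-free $n$, (iii) fails via $\cl2\ne\cl1$; and for odd square-free $n$ it compares $\cl c$ directly with $\cl1$ to break (iii). You instead split on whether $2$ is a unit square modulo $n$. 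If it is not, $\cl2\ne\cl1$ breaks (iii). If it is, $2$ is a non-zero square mod $p$, so $\cl c\ne\cl2$ and (iv) fails. Your route never needs axiom (ii) and shows the failure always occurs inside $\cl1+\cl1$. The comparison with $\cl2$ is a detour, though: in your second case $\cl2=\cl1$ anyway. Comparing $\cl c$ with $\cl1$, as the paper does, works uniformly for every $n\ge2$ and every $p\mid n$, since the lemma covers $p=2$ and the lift works for non-square-free $n$. So neither your case split nor the paper's is really needed. Your intermediate steps can therefore be discarded: the choice $u=3$, $v=4$, the special treatment of $p\in\{2,3,5\}$, and ``use the other prime factors'' for $n=2m$. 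When you write it up, state the dichotomy as ``$2$ is or is not a unit square modulo $n$''. The pair ``some $p\mid n$ lies in $\{2,3,5\}$'' versus ``$2$ is a square modulo every $p\mid n$'' does not literally cover $n=11$, for example. Your remark that the (iii) argument works whenever $2$ is not a non-zero square modulo some $p\mid n$ does fill that gap.
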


\begin{proof}
For $n=1$, one has $1=0$, so the first axiom in Definition \ref{defn:mrrealreduced} fails.  Let $n\ge2$.

Suppose first that $p^2\mid n$ for some prime $p$.  Then the class $\cl p$ is non-zero in $M(n)$.  However,
\[
        (\cl p)^3=\cl{p^3}.
\]
By Corollary \ref{cor:gcd-invariant}, equivalence in $M(n)$ preserves the greatest common divisor with $n$.  Since $\gcd(p,n)$ and $\gcd(p^3,n)$ have different $p$-adic valuations, $\cl{p^3}\ne\cl p$.  Hence the identity $a^3=a$ fails.

It remains to treat the case where $n$ is square-free.  If $2\mid n$, then
\[
        \cl2\in\cl1+\cl1
\]
but $\cl2\ne\cl1$, again by Corollary \ref{cor:gcd-invariant}.  Thus the axiom $c\in a+ab^2\Rightarrow c=a$ fails with $a=b=1$.

Assume now that $n$ is odd and square-free.  Choose a prime divisor $p$ of $n$.  By Lemma \ref{lem:two-squares-local}, there exist non-zero residues $u,v$ modulo $p$ such that $u^2+v^2$ is not a non-zero square modulo $p$.  By the Chinese Remainder Theorem, choose units $a,b$ modulo $n$ whose images modulo $p$ are $u,v$, respectively.  Then
\[
        \cl{a^2+b^2}\in\cl1+\cl1.
\]
Modulo $p$, the element $a^2+b^2$ is not a non-zero square.  Therefore $\cl{a^2+b^2}\ne\cl1$ in $M(n)$.  Again the axiom $c\in1+1\Rightarrow c=1$ fails.  Hence $M(n)$ is not real reduced.
\end{proof}

\begin{teo}\label{teo:not-formally-real}
For every $n\ge2$, the multiring $M(n)$ is not formally real.
\end{teo}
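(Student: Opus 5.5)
The plan is to show that $\cl{-1}=\cl{n-1}$ lies in an iterated multivalued sum $\cl1+\cl1+\cdots+\cl1$ of copies of the square $\cl1=\cl1^2$. The natural candidate is $n-1$ copies of $\cl1$: the ordinary integer sum $1+1+\cdots+1$ with $n-1$ terms is congruent to $-1$ modulo $n$.

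The first step is to check that the ordinary sum of representatives lies in the multivalued sum. If $\cl x\in\cl a+\cl b$ is witnessed by $x=a+b$ (taking $u=v=w=1$ in Lemma \ref{lem:basic-description}), then $\cl{a+b}\in\cl a+\cl b$. By induction on the number of summands, using the definition of iterated multivalued sums (a sum $x_1+\cdots+x_k$ is the union of $y+x_k$ over $y\in x_1+\cdots+x_{k-1}$), one gets
\[
\cl{a_1+\cdots+a_k}\in\cl{a_1}+\cdots+\cl{a_k}.
\]

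Applying this with $a_1=\cdots=a_{n-1}=1$, which is valid since $n\ge2$ guarantees at least one summand, gives
\[
\cl{-1}=\cl{n-1}\in\cl1+\cdots+\cl1 \quad (n-1 \text{ terms}).
\]
Each summand $\cl1=\cl1^2$ is a square in $M(n)$, so $-1\in\sum M(n)^2$ and $M(n)$ is not formally real in the sense of Definition \ref{defn:formally-real}.

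For $n=2$ the argument reads $-1=1=1^2$, which is already a sum of one square, so the edge case is covered. The only point needing care is the convention for iterated multisums, and the induction above handles it. I do not expect a real obstacle: the statement is a direct consequence of the fact that $-1$ is a positive integer multiple of $1$ in $\mathbb Z_n$.
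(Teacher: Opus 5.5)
Your proof is correct, but it takes a different and more elementary route than the paper. The paper invokes Lagrange's four-square theorem to write $n-1=x_1^2+x_2^2+x_3^2+x_4^2$, and concludes $\cl{-1}\in\cl{x_1}^2+\cdots+\cl{x_4}^2$. You instead use only that $-1=(n-1)\cdot 1$ in $\mathbb Z_n$, so $\cl{-1}$ lies in the sum of $n-1$ copies of the square $\cl1=\cl1^2$.

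Your argument needs no number theory and works verbatim for the Marshall quotient of any ring of positive characteristic. It also makes explicit a step the paper leaves implicit: $\cl{a_1+\cdots+a_k}\in\cl{a_1}+\cdots+\cl{a_k}$ for iterated multivalued sums. The paper's proof relies on exactly this fact, with $a_i=x_i^2$ and $\cl{x_i}^2=\cl{x_i^2}$.

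In exchange, the paper's route gives a uniform bound: $-1$ is a sum of at most four squares in every $M(n)$, independently of $n$, whereas your representation uses $n-1$ summands. Definition \ref{defn:formally-real} allows arbitrary finite sums, so this bound is not needed for the theorem as stated.
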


\begin{proof}
By Lagrange's four-square theorem, the integer $n-1$ is a sum of four squares:
\[
        n-1=x_1^2+x_2^2+x_3^2+x_4^2
\]
for suitable integers $x_1,x_2,x_3,x_4$.  Reducing modulo $n$, we obtain
\[
        -1\equiv x_1^2+x_2^2+x_3^2+x_4^2\pmod n.
\]
Therefore
\[
        \cl{-1}\in \cl{x_1}^2+\cl{x_2}^2+\cl{x_3}^2+\cl{x_4}^2
\]
in $M(n)$.  Hence $-1$ is a sum of squares in $M(n)$, so $M(n)$ is not formally real.
\end{proof}

\subsection{The invertible part}

We now study the subset of invertible classes.  Let
\[
        \U(n)=M(n)^\times\cup\{\cl0\}.
\]

\begin{lem}\label{lem:units-Mn}
For $a\in\mathbb Z_n$, the class $\cl a$ is invertible in $M(n)$ if and only if $a$ is a unit modulo $n$.
\end{lem}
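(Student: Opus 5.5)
We prove the two directions separately, using only the description of $M(n)$ in Lemma \ref{lem:basic-description} and the fact that multiplication in $M(n)$ is induced from $\mathbb Z_n$, that is, $\cl a\,\cl b=\cl{ab}$ with $\cl1$ as identity.

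\emph{If $a$ is a unit, then $\cl a$ is invertible.} Let $a^{-1}$ be the inverse of $a$ in $\mathbb Z_n$. Then
\[
\cl a\,\cl{a^{-1}}=\cl{aa^{-1}}=\cl1,
\]
so $\cl a$ is invertible in $M(n)$ with inverse $\cl{a^{-1}}$. This direction is immediate.

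\emph{If $\cl a$ is invertible, then $a$ is a unit.} Suppose $\cl a\,\cl b=\cl1$ for some $b\in\mathbb Z_n$. Then $\cl{ab}=\cl1$. By Lemma \ref{lem:basic-description}(i) there is a unit $u$ with $ab\equiv u^2\pmod n$. The right-hand side is a unit, so $a\cdot(bu^{-2})\equiv1\pmod n$, and hence $a$ is a unit.

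Alternatively, Corollary \ref{cor:gcd-invariant} gives $\gcd(ab,n)=\gcd(1,n)=1$, which forces $\gcd(a,n)=1$. The only step needing any care is this translation of the class equation $\cl{ab}=\cl1$ into a congruence. It is handled directly by Lemma \ref{lem:basic-description}, so no real obstacle is expected.

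The degenerate case $n=1$ is consistent with the statement: there every element is a unit and the unique class is invertible.
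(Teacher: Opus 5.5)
Your proof is correct and follows the paper's argument. The forward direction is identical. For the converse, the paper uses exactly your alternative route, applying Corollary \ref{cor:gcd-invariant} to $\cl{ab}=\cl1$. Your primary route through Lemma \ref{lem:basic-description}(i) is the same fact, unpacked one step earlier.
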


\begin{proof}
If $a$ is a unit modulo $n$, then $\cl a\,\cl{a^{-1}}=\cl1$, so $\cl a$ is invertible.  Conversely, if $\cl a\,\cl b=\cl1$, then $\cl{ab}=\cl1$.  By Corollary \ref{cor:gcd-invariant}, $\gcd(ab,n)=1$.  Hence $a$ is a unit modulo $n$.
\end{proof}

\begin{teo}\label{teo:unit-submultiring}
The set $\U(n)=M(n)^\times\cup\{\cl0\}$ is a submultiring of $M(n)$ if and only if $n=1$ or $n$ is prime.
\end{teo}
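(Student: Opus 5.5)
The proof splits into the two directions of the equivalence; the converse direction carries the real content.

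For $n=1$, $M(1)$ has the single element $\cl0=\cl1$. Hence $\U(1)=M(1)$, which is trivially a submultiring. For $n=p$ prime, every non-zero residue is a unit. By Lemma \ref{lem:units-Mn}, every non-zero class of $M(p)$ is invertible, so $\U(p)=M(p)$ is again trivially a submultiring.

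For the converse, suppose $n$ is neither $1$ nor prime. Since $\U(n)$ is closed under products, contains $\cl0,\cl1$, and is closed under $-$, it is a submultiring exactly when it is closed under the multivalued addition. So it suffices to exhibit units $a,b$ modulo $n$ together with some $\cl c\in\cl a+\cl b$ that is a non-zero non-unit class. The simplest choice is $c=a+b$ itself, which lies in $\cl a+\cl b$ by Lemma \ref{lem:basic-description} with $u=v=w=1$. By Lemma \ref{lem:units-Mn}, $\cl c\notin\U(n)$ precisely when $c$ is a non-unit, and $\cl c\neq\cl0$ precisely when $c\not\equiv0\pmod n$. The task is therefore to find units $a,b$ with $a+b$ a non-zero non-unit modulo $n$.

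The construction goes as follows. Pick a prime $p\mid n$; since $n$ is composite, $p<n$. We want $a+b\equiv p\pmod n$ with $a,b$ units, for then $c=p$ is non-zero and divisible by $p$, hence a non-unit. By the Chinese Remainder Theorem it suffices to solve this locally at each prime power $q^e\,\|\,n$.

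At a prime $q\ne p$, the element $p$ is a unit modulo $q^e$. If $q$ is odd, take $a=b=p/2$, which is a unit. If $q=2$ (so $p$ is odd), we need two odd numbers summing to $p$ modulo $2^e$; take $a=1$, $b=p-1$. Here $b$ is even, so this fails, and this exposes a genuine parity obstruction: an odd sum cannot be a sum of two units modulo $2$. So when $n$ is even, the prime to use is $p=2$ (or any even target), since sums of units modulo $2$ are even.

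This suggests a cleaner choice of target. If $n$ is even, take $c=2$: we need units with $a+b\equiv2$, and $a=b=1$ works. Then $\cl2$ is a non-unit, and it is non-zero because $n\ne2$ ($n$ even and not prime). If $n$ is odd and composite, pick a prime $p\mid n$ with $p<n$, and take $a=1$, $b=p-1$. Here $b$ must be a unit modulo $n$, i.e. $\gcd(p-1,n)=1$, which may fail. The fallback is the CRT construction: require $a+b\equiv0$ at $p^e$, using $a=1,b=-1$ there, and $a+b\equiv1$ at every other prime power, using $a=b=2^{-1}$, which is valid since $n$ is odd. The resulting $c$ is non-zero because $n$ has another prime factor or $p^e$ with $e\ge2$. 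In the latter case ($n=p^e$), use instead $a=1$, $b=p-1$, which is a unit modulo $p$ and hence modulo $p^e$, so that $c=p\ne0$.

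The main obstacle is thus purely in handling the parity at $2$ and the prime-power case separately. I would organize the converse into three cases: $n$ even; $n$ an odd prime power $p^e$ with $e\ge2$; and $n$ odd with at least two distinct prime factors. Examples \ref{ex:M9} and \ref{ex:M21} serve as sanity checks for the last two cases.
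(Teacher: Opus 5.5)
Your proof is correct and follows the paper's route. The even case via $\cl2\in\cl1+\cl1$ is identical. For odd composite $n$ the paper also uses the Chinese Remainder Theorem to get units $1,b$ with $c=1+b$ a non-zero non-unit, but its choice $b\equiv p-1 \pmod{p^\alpha}$ and $b\equiv 1$ modulo the other prime-power factors gives $c\equiv p$ and $c\equiv 2$ locally, which handles $n=p^\alpha$ and the multi-prime case at once. One slip: in your CRT fallback $c\equiv0\pmod{p^e}$, so its non-vanishing comes only from the other prime factor, not from $e\ge2$; since you treat $n=p^e$ separately with $c=p$, your final three-case argument is still sound.
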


\begin{proof}
If $n=1$, the assertion is immediate.  If $n=p$ is prime, every non-zero class in $M(p)$ is invertible, so $\U(p)=M(p)$ and $\U(p)$ is a submultiring.

Conversely, suppose that $n$ is composite.  We prove that $\U(n)$ is not closed under multivalued addition.

If $n$ is even, then $n>2$ and
\[
        \cl2\in\cl1+\cl1.
\]
The class $\cl2$ is neither zero nor invertible in $M(n)$, so $\cl2\notin\U(n)$.

Now suppose that $n$ is odd and composite.  Choose a prime-power divisor $p^\alpha\Vert n$.  By the Chinese Remainder Theorem, choose a unit $b$ modulo $n$ such that
\[
        b\equiv p-1\pmod{p^\alpha}
\]
and
\[
        b\equiv1\pmod{q^\beta}
\]
for every other prime-power divisor $q^\beta\Vert n$.  Then $b$ is a unit modulo $n$, so $\cl b\in\U(n)$.  Put $c=1+b$.  We have
\[
        \cl c\in\cl1+\cl b.
\]
Moreover, $c$ is divisible by $p$, so it is not a unit modulo $n$.  Since $n$ is composite, the congruences above also show that $c$ is not congruent to zero modulo $n$.  Thus $\cl c\ne\cl0$ and $\cl c\notin M(n)^\times$, so $\cl c\notin\U(n)$.  Hence $\U(n)$ is not a submultiring.
\end{proof}

\begin{cor}\label{cor:unit-hyperfield}
The set $\U(n)$ is a hyperfield with the operations induced from $M(n)$ if and only if $n$ is prime.
\end{cor}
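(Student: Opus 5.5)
The corollary will follow from Theorem \ref{teo:unit-submultiring} together with Lemma \ref{lem:units-Mn}. I will prove the two directions separately.

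\emph{If $n$ is prime.} Then $\U(p)=M(p)$ by the proof of Theorem \ref{teo:unit-submultiring}, so $\U(p)$ is a multiring. By Lemma \ref{lem:units-Mn}, every non-zero class $\cl a$ with $a\not\equiv0\pmod p$ is invertible, since $a$ is a unit modulo $p$. We also need $\cl1\neq\cl0$. This holds because $p\ge2$ and Corollary \ref{cor:gcd-invariant} separates $\gcd(1,p)=1$ from $\gcd(0,p)=p$. Hence $\U(p)$ is a multifield, which by the convention in Definition \ref{defn:multiring} is the same as a hyperfield.

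\emph{Conversely, suppose $\U(n)$ is a hyperfield with the induced operations.} In particular it is closed under the induced multivalued addition, so it is a submultiring of $M(n)$. Theorem \ref{teo:unit-submultiring} then gives $n=1$ or $n$ prime. It remains to exclude $n=1$. In that case $M(1)$ is the one-element structure, so $1=0$. A hyperfield must have $1\neq0$: otherwise it fails to be a field in the intended sense, because every non-zero element must be invertible and $0$ must be excluded from the unit group.

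The only delicate step is this $n=1$ case. I will make explicit that hyperfields, like fields, are required to satisfy $0\neq1$. This is implicit in Definition \ref{defn:multiring}; the statement of the corollary excludes $n=1$ precisely for this reason. With that convention, $n=1$ is excluded and the equivalence follows.
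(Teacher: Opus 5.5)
Your proposal is correct and follows the paper's proof essentially step for step. In the prime case $\U(p)=M(p)$ is a hyperfield; conversely, closure under the induced addition makes $\U(n)$ a submultiring, so Theorem~\ref{teo:unit-submultiring} gives $n=1$ or $n$ prime, and $n=1$ is excluded because $0=1$. The only cosmetic difference is in the prime case: you check it directly via Lemma~\ref{lem:units-Mn} and $\cl1\neq\cl0$, whereas the paper simply notes that a Marshall quotient of a field by a subgroup of its multiplicative group is a hyperfield.
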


\begin{proof}
If $n$ is prime, then $\U(n)=M(n)$ and $M(n)$ is the Marshall quotient of a field by a subgroup of its multiplicative group; hence it is a hyperfield.  Conversely, if $\U(n)$ is a hyperfield with the induced operations, then it is in particular a submultiring of $M(n)$.  By Theorem \ref{teo:unit-submultiring}, $n=1$ or $n$ is prime.  The case $n=1$ is excluded for hyperfields because $0=1$.
\end{proof}

\begin{cor}\label{cor:unit-hyp}
The hyperfield $\U(n)$ is hyperbolic if and only if $n=p$ is a prime with $p\ge7$.
\end{cor}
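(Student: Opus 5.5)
The plan is to read the statement as follows: it only makes sense when $\U(n)$ is a hyperfield. By Corollary \ref{cor:unit-hyperfield}, that happens exactly when $n=p$ is prime, so the claim reduces to the assertion that, for $p$ prime, $\U(p)$ is hyperbolic if and only if $p\ge7$. For $n$ not prime, $\U(n)$ is not a hyperfield with the induced operations, so it is not a hyperbolic hyperfield, and the ``only if'' direction for such $n$ is immediate.

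Next I will record that for $n=p$ prime every non-zero class of $M(p)$ is invertible, as noted in the proof of Theorem \ref{teo:unit-submultiring}. Hence $\U(p)=M(p)$ as sets. Because the operations on $\U(p)$ are the ones induced from $M(p)$, the two also agree as multirings. In particular, the hyperbolicity condition $\cl1-\cl1=\U(p)$ is literally the condition $\cl1-\cl1=M(p)$.

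Finally I will invoke Lemma \ref{lem:prime-hyp}, which states that $M(p)$ is hyperbolic if and only if $p\ge7$. Combining this with the previous paragraph gives the corollary.

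There is no substantial obstacle. The only delicate point is the phrasing ``the hyperfield $\U(n)$''. It must be made explicit that the non-prime case is excluded by Corollary \ref{cor:unit-hyperfield}. It must also be made explicit that the submultiring structure of $\U(p)$ coincides with $M(p)$, so that no sum in $\cl1-\cl1$ is lost by restricting to $\U(p)$.
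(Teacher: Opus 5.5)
Your proposal is correct and follows the paper's proof: you reduce to prime $n$ via Corollary \ref{cor:unit-hyperfield}, identify $\U(p)$ with $M(p)$, and apply Lemma \ref{lem:prime-hyp}. Your check that $\cl1-\cl1$ computed in $\U(p)$ agrees with the same sum in $M(p)$ just makes explicit a step the paper leaves implicit.
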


\begin{proof}
By Corollary \ref{cor:unit-hyperfield}, $\U(n)$ is a hyperfield only when $n$ is prime.  In that case $\U(p)=M(p)$, and the result follows from Lemma \ref{lem:prime-hyp}.
\end{proof}

\begin{cor}\label{cor:unit-not-real-reduced}
There is no positive integer $n$ such that $\U(n)$ is a real reduced hyperfield with the operations induced from $M(n)$.
\end{cor}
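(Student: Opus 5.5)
The plan is to reduce to the prime case and then use Definition \ref{defn:mfrealreduced} directly. By Corollary \ref{cor:unit-hyperfield}, $\U(n)$ is a hyperfield with the induced operations only when $n$ is prime. For $n=1$ it is not a hyperfield at all, since $0=1$. For composite $n$ it is not even a submultiring, by Theorem \ref{teo:unit-submultiring}. In either case it cannot be a real reduced hyperfield. So it suffices to treat $n=p$ prime, where $\U(p)=M(p)$.

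For $n=p$, I will show that the axiom $a\in 1+1\Rightarrow a=1$ fails. Lemma \ref{lem:two-squares-local} gives non-zero $u,v\in\mathbb F_p$ such that $u^2+v^2$ is not a non-zero square. By Lemma \ref{lem:basic-description}(ii), with $c=u^2+v^2$ and $w=1$, we get $\cl{u^2+v^2}\in\cl1+\cl1$ in $M(p)$.

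It remains to check that $\cl{u^2+v^2}\ne\cl1$. If they were equal, Lemma \ref{lem:basic-description}(i) would give $u^2+v^2\equiv t^2$ for some unit $t$. That would make $u^2+v^2$ a non-zero square, contradicting the choice of $u,v$. Concretely, when $p=2$ or $-1$ is a square, $\cl0\in\cl1+\cl1$ with $\cl0\ne\cl1$. Otherwise some non-square class $\cl s$ lies in $\cl1+\cl1$. So $\U(p)$ is not real reduced.

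No step here is a real obstacle, since Lemma \ref{lem:two-squares-local} does all the arithmetic work. The only point needing care is the reduction for composite $n$: the statement asks that $\U(n)$ be a real reduced hyperfield with the induced operations, so failing to be a hyperfield already settles those cases. Alternatively, one could appeal to Theorem \ref{teo:not-real-reduced} for $M(p)=\U(p)$, but that theorem uses the multiring axioms, so the direct check above against the hyperfield definition is cleaner.
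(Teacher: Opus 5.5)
Your proposal is correct and follows essentially the same route as the paper: reduce to $n=p$ prime via Corollary \ref{cor:unit-hyperfield}, then show that $\cl1+\cl1$ contains an element different from $\cl1$ using Lemma \ref{lem:two-squares-local}. The paper gets this element by citing the proof of Theorem \ref{teo:not-real-reduced} in the prime case, whereas you carry out the check directly against the hyperfield definition; the underlying argument is the same.
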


\begin{proof}
If $\U(n)$ is a hyperfield, then $n=p$ is prime by Corollary \ref{cor:unit-hyperfield}.  Thus $\U(p)=M(p)$.  The proof of Theorem \ref{teo:not-real-reduced}, applied in the prime case, shows that $\cl1+\cl1$ contains an element different from $\cl1$.  Therefore $M(p)$ is not real reduced as a hyperfield.
\end{proof}

\section{Conclusion}

We have rewritten the study of the Marshall quotients $M(n)$ in arithmetic terms.  The basic equivalence relation is multiplication by square classes of units, and the multivalued addition is controlled by congruences of the form
\[
        cw^2\equiv au^2+bv^2\pmod n.
\]
This allows the structure of $M(n)$ to be analyzed prime-power by prime-power.

The classification obtained here is as follows.  The equality quotient $M(n)=\mathbb Z_n$ occurs exactly for $n\mid24$, while the arithmetically elementary quotients are obtained precisely when $n\mid24$ or $n$ is prime.  Hyperbolicity is governed by the small primes $2$, $3$, and $5$: one has $M(n)$ hyperbolic if and only if $\gcd(n,30)=1$.  On the other hand, no $M(n)$ is real reduced, and no $M(n)$ with $n\ge2$ is formally real.  Finally, the invertible classes together with zero form a submultiring only for $n=1$ or $n$ prime; in the prime case this gives a hyperfield, hyperbolic exactly for primes $p\ge7$, and never real reduced.

These results show that the finite quotients $M(n)$ provide a useful testing ground for questions about multirings, hyperfields, and the passage from ring-theoretic quadratic-form data to invertible square-class structures.

\section*{Declarations}
\textbf{Conflict of interest.} The authors declare no conflict of interest.

\section*{Acknowledgments:}
The first author was supported by FAPESB (EDITAL FAPESB No. 012/2022 -- UNIVERSAL -- No. APP0044/2023). The third author was supported by the S\~ao Paulo Research Foundation (FAPESP, Brazil), thematic project {\em Rationality, logic and probability -- RatioLog}, grant  2020/16353-3 and by a post-doctoral grant from FAPESP, grant 2024/18577-7.

\end{document}